\newtheorem{thm}{Theorem}[section]
\newtheorem{cor}[thm]{Corollary}
\newtheorem{lem}[thm]{Lemma}
\theoremstyle{definition}
\newtheorem{rem}[thm]{Remark}
\newcommand{\Z}{\mathbb{Z}}
\let\c@equation\c@thm
\numberwithin{equation}{section}
\let\SK@label\label\fi
 \let\your@thm\@thm
 \def\@thm#1#2#3{\gdef\currthmtype{#3}\your@thm{#1}{#2}{#3}}
 \def\mylabel#1{{\let\your@currentlabel\@currentlabel\def\@currentlabel
  {\currthmtype~\your@currentlabel}
 \SK@label{#1@}}\label{#1}}
\title{Motivic stable homotopy and the stable 51 and 52 stems}
\author{Daniel C. Isaksen}
\address{Department of Mathematics, Wayne State University, Detroit, MI 48202}
\email{isaksen@wayne.edu}
\author{Zhouli Xu}
\address{Department of Mathematics, The University of Chicago, Chicago, IL 60637}
\email{xu@math.uchicago.edu}
\thanks{The second author was supported by NSF grant DMS-1202213.}
\subjclass[2000]{55Q10, 55Q45, 55T15}
\keywords{stable homotopy group, Adams spectral sequence}
\begin{document}

\begin{abstract}
We establish a differential $d_2(D_1)=h_0^2h_3g_2$ in the $51$-stem of the Adams spectral sequence at the prime $2$,
which gives the first correct calculation of the stable
51 and 52 stems.
This differential is remarkable since we know of no
way to prove it without recourse to the motivic
Adams spectral sequence.  It is the last undetermined
differential in the range of the first author's detailed
calculations of the $n$-stems for $n<60$ \cite{DI1}.
This note advertises the use
of the motivic Adams spectral sequence to obtain
information about classical stable homotopy groups.
\end{abstract}

\maketitle

\section{Introduction}

The problem of computing the stable homotopy groups of spheres is of fundamental importance in algebraic topology. Although this subject has been studied for a very long time, the Adams spectral sequence is still the best way to do stemwise computations at the prime 2.

Bruner produced a computer-generated table of $d_2$ differentials in the Adams
spectral sequence \cite{Br3}.
He started with his theorem on the interaction between Adams differentials
and squaring operations \cite{BMMS} to obtain several values of the $d_2$ differential.
Then he methodically exploited all (primary) multiplicative relations, both
forwards and backwards, to deduce more and more values of the $d_2$ differential.
Through at least the 80-stem, this procedure gives the vast majority of the
values of the $d_2$ differential.

In Bruner's approach, the first unknown value is $d_2(D_1)$,
where $D_1$ is a certain element in the 52-stem.
This gives a precise sense to the claim that $d_2(D_1)$ is
``harder" than any previous $d_2$ differential.

Mark Mahowald communicated (unpublished) the following argument for the presence of the differential $d_2(D_1)=h_0^2h_3g_2$.  First, there is an element $x$ in the $37$-stem that detects $\sigma \theta_4$
\cite[Proposition 3.5.1]{BMT}.  Since $h_3^2 x = h_0^2 h_3 g_2$,
we deduce that $h_0^2 h_3 g_2$ detects $\sigma^3 \theta_4$.
Now $\sigma^3 = \nu \nu_4$, so
$h_0^2 h_3 g_2$ detects $\nu \nu_4 \theta_4$.
The final step is the claim that $\nu_4 \theta_4$ is zero.

This final step turns out to be false;
$\nu_4 \theta_4$ is detected by $h_2 h_5 d_0$ \cite[Lemma 4.2.90]{DI1}.
Mahowald's mistake arose from an incorrect understanding of the Toda
bracket $\langle \theta_4, 2, \sigma^2  \rangle$ \cite[Lemma 4.2.91]{DI1}.

In this note, we will repair the hole left by this mistake.
The proof relies on motivic calculations in a fundamental way.
In other words, our argument does not work if applied in the classical context.
We will first establish a Massey product involving the element $D_1$.
Then we will apply the higher Leibniz rule on the interaction of
Adams differentials with Massey products \cite{Moss}.
The second author discovered this proof
based on motivic calculations by the first author.
We adopt notation from \cite{DI1} without further explanation.
For charts of the classical and
motivic Adams spectral sequences, see \cite{DI2}.

The higher Leibniz rule of Moss \cite{Moss} is an appealing result
that has not been as useful in practice as one might expect.
In applications, indeterminacies often interfere.
The differential that we establish here is an unusually clear application
of the higher Leibniz rule.
Our work suggests that an extension of Bruner's program to methodically
exploit all $3$-fold Massey products is likely to lead to further
new results about Adams $d_2$ differentials.

The origin of this note lies in the first author's analysis of the Adams
spectral sequence through the 59-stem.  The first author was able to
give careful arguments for every Adams differential, with the sole exception
of possible differentials on the element $D_1$.
The second author was instrumental in finishing the last remaining differential,
whose argument we present here.

The chief consequence of our Adams differential calculation is the
following results about the stable 51 and 52 stems.

\begin{thm}
The 2-primary order of the stable 51-stem is 128.
As a group, the 2-primary stable 51-stem is either
$\Z/8 \oplus \Z/8 \oplus \Z/2$ or
$\Z/8 \oplus \Z/4 \oplus \Z/2 \oplus \Z/2$.
\end{thm}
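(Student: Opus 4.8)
The plan is to read off the statement from the $E_\infty$-page of the classical Adams spectral sequence in the $51$-stem, using the differential $d_2(D_1) = h_0^2 h_3 g_2$ established above together with all the other differentials recorded in \cite{DI1}. Concretely, I would first assemble the $E_2$-page in stem $51$ and list every Adams differential entering or leaving that stem; by \cite{DI1} (and the charts in \cite{DI2}) every such differential is already known except for the behavior of the $52$-stem class $D_1$, equivalently whether $h_0^2 h_3 g_2$ is a permanent cycle. The main result supplies exactly this missing piece: from $d_2(D_1) = h_0^2 h_3 g_2$ one gets $d_2(h_0 D_1) = h_0^3 h_3 g_2$ and so on, so none of $h_0^2 h_3 g_2, h_0^3 h_3 g_2, \dots$ survives. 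Propagating this through the bookkeeping leaves exactly seven classes on the $E_\infty$-page in stem $51$, and since the spectral sequence converges to the $2$-completion of $\pi_{51}$ with every $E_\infty$-term an $\mathbb{F}_2$-vector space, the $2$-primary order of the stable $51$-stem equals $2^7 = 128$.

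To identify the group up to the stated ambiguity, I would then analyze the $h_0$-multiplications among these seven $E_\infty$-classes and, after that, the hidden $2$-extensions. The $h_0$-tower structure already forces a $\Z/8$ summand, and resolving the hidden $2$-extensions that follow from known multiplicative relations, Massey products, and comparison with the motivic computation should cut the possibilities down to the two groups listed in the statement. The one thing these arguments will not settle is a single hidden $2$-extension whose presence merges a $\Z/4 \oplus \Z/2$ occupying certain filtrations into a genuine $\Z/8$.

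That undetermined hidden extension is the real obstacle, and it is precisely why the theorem is stated with two alternatives rather than one: the $E_\infty$-page and the structural techniques available here pin down the associated graded of the Adams filtration on $\pi_{51}$ but not this last extension. Everything else in the argument is routine bookkeeping against the charts of \cite{DI1} and \cite{DI2}.
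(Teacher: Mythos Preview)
Your proposal is correct and follows essentially the same route as the paper: read off the seven surviving classes on the Adams $E_\infty$-page in stem $51$ (using $d_2(D_1)=h_0^2h_3g_2$ as the last missing input), and observe that a single unresolved hidden $2$-extension accounts for the two alternatives. The paper is simply more explicit, naming the classes as the $h_0$-tower on $P^6h_2$ (the image-of-$J$ $\Z/8$), the class $h_2B_2$ (the lone $\Z/2$), and $h_3g_2$, $h_0h_3g_2$, $gn$, with the undetermined extension being from $h_0h_3g_2$ to $gn$; no auxiliary hidden-extension arguments or motivic comparisons are needed beyond that.
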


\begin{proof}
This follows immediately from the Adams $E_\infty$-page.
The first $\Z/8$ lies in the image of $J$ and has a generator
that is detected by $P^6 h_2$.
The last $\Z/2$ is detected by $h_2 B_2$.

The remaining elements are detected by $h_3 g_2$, $h_0 h_3 g_2$,
and $g n$.  These elements assemble into $\Z/8$ if
there is a hidden $2$ extension from $h_0 h_3 g_2$ to $g n$,
and they assemble into $\Z/4 \oplus \Z/2$ if there is no
hidden $2$ extension.
\end{proof}

\begin{rem}
According to \cite{Ko}, there is no hidden $2$ extension in the 
stable 51-stem.  However, this claim is inconsistent with other
claims in \cite{Ko}, as discussed in \cite[Remark 4.1.17]{DI1}.
Because of this uncertainty, we leave this 2 extension unresolved.
\end{rem}

\begin{thm}
The 2-primary stable 52-stem is equal to $\Z/2 \oplus \Z/2 \oplus \Z/2$.
\end{thm}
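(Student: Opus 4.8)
The plan is to read the result off the Adams $E_\infty$-page in stem~$52$, just as in the proof of the $51$-stem theorem above. The essential input is the differential $d_2(D_1)=h_0^2h_3g_2$: since $D_1$ lies in the $52$-stem and supports a nonzero $d_2$, it dies on the $E_2$-page, and (being a source rather than a target) nothing maps onto it. Because, by \cite{DI1}, this was the last undetermined Adams differential in the relevant range, knowing it pins down the entire $E_\infty$-page in stem~$52$. So the first step is to record what survives: there are exactly three classes, no two of them connected by $h_0$ on $E_\infty$, so the $52$-stem has order $8$. Confirming this — in particular that every potential differential into or out of stem~$52$ is settled once $d_2(D_1)$ is known — is a matter of reading the charts of \cite{DI1} and \cite{DI2}.

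The second step is to resolve the group extension, i.e.\ to decide whether these three classes assemble into $\Z/2\oplus\Z/2\oplus\Z/2$ or instead into $\Z/4\oplus\Z/2$ or $\Z/8$ through hidden $2$-extensions. I would rule out each hidden $2$-extension among the surviving classes by the standard means: write a surviving class, where possible, as $\eta$ or $\nu$ times a class of smaller stem whose $2$-divisibility is already understood, so that a hidden $2$ would force an impossible relation; and in the remaining cases use a Toda-bracket shuffle or a direct filtration count showing there is no room for a hidden $2$. Comparison with \cite{Ko} serves as a consistency check, subject to the caveats in \cite[Remark 4.1.17]{DI1}. Unlike the $51$-stem, where one hidden $2$-extension is left genuinely ambiguous, here every such question should be decidable, so the group structure is unambiguous.

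I expect the hidden-extension analysis to be the main obstacle. The $52$-stem sits in a dense part of the chart with several nearby $h_0$-towers, so one must argue carefully that a putative hidden $2$-extension is genuinely absent in homotopy rather than merely invisible on $E_\infty$. Once all of these are disposed of, the conclusion $\pi_{52}\cong\Z/2\oplus\Z/2\oplus\Z/2$ is immediate.
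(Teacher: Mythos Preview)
Your approach is essentially the paper's: read the $52$-stem off the Adams $E_\infty$-page once $d_2(D_1)$ is known, then settle the hidden $2$-extensions. The only difference is that the paper dispatches the second step in one line by citing \cite{DI1} for the fact that there are no \emph{possible} hidden $2$-extensions in this stem, so the case-by-case analysis you anticipate as ``the main obstacle'' is not actually needed.
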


\begin{proof}
This is immediate from the Adams $E_\infty$-page.
According to \cite{DI1}, there are no possible hidden $2$ extensions.
\end{proof}


\section{The differential}

\vskip \baselineskip

The crucial calculation is the following lemma which is due to Tangora \cite{Tan}.

\begin{lem}
\label{lem:G-bracket}
In the classical Adams $E_2$ page, we have a Massey product
$$G=\langle h_1,h_0,D_1\rangle .$$
\end{lem}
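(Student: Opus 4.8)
The plan is to work entirely in $\mathrm{Ext}_{\mathcal A}(\mathbb{F}_2,\mathbb{F}_2)$, the $E_2$-page of the classical Adams spectral sequence, and to proceed in two stages: first show that the three-fold Massey product $\langle h_1,h_0,D_1\rangle$ is defined and has vanishing indeterminacy, and then identify the resulting element as $G$. Definedness requires $h_0h_1=0$, which is the standard relation in $\mathrm{Ext}$, together with $h_0D_1=0$, which I would read off from the multiplicative structure of $\mathrm{Ext}$ in this range (Tangora's computation, or equivalently the charts of \cite{DI2}). The bracket then lies in the tri-degree obtained from those of $h_1$, $h_0$, $D_1$ by adding internal degrees and homological degrees and subtracting one from the homological sum; this is exactly the tri-degree of $G$. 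Its indeterminacy is $h_1\cdot\mathrm{Ext}^{\,*}+\mathrm{Ext}^{\,*}\cdot D_1$ in the relevant bidegrees: the second summand vanishes for degree reasons, since the pertinent $\mathrm{Ext}^1$ group is zero (its internal degree is not a power of $2$), and for the first summand I would check from the charts that $h_1$ annihilates the relevant stem-$53$ classes. Thus $\langle h_1,h_0,D_1\rangle$ is a single element sitting in the same position as $G$, and it remains to show they coincide.

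For that identification I would follow Tangora and compute in the May spectral sequence. Fix May representatives of $h_0$, $h_1$, $D_1$ and $G$, and record the May filtrations at which the two needed relations become visible: $h_0h_1=0$ is witnessed already on May $E_1$ by $d_1(h_{2,0})=h_{1,0}h_{1,1}$, so $h_{2,0}$ is a first nullhomotopy, while $h_0D_1=0$ is witnessed by an explicit nullhomotopy whose May filtration one tracks. One then forms the Massey product on the appropriate page of the May spectral sequence --- a finite computation with the May generators $h_{i,j}$, $b_{i,j}$ and the May differentials --- and checks that it is strictly defined there, with value the May-page class detecting $G$. By the convergence theorem for Massey products in the May spectral sequence (the analogue for this spectral sequence of Moss's theorem, due to May), a strictly defined Massey product on $E_r$ detecting a class lifts to a genuine Massey product in $\mathrm{Ext}$; together with the vanishing of the indeterminacy this gives $G=\langle h_1,h_0,D_1\rangle$.

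I expect the main obstacle to be the May-spectral-sequence bookkeeping rather than any conceptual difficulty: one must pin down the correct page on which the bracket is strictly defined, track the May-filtration jumps in both nullhomotopies, and rule out that the answer is $0$ or differs from $G$ by a term of strictly lower May filtration. A variant that avoids the May spectral sequence is a direct computation in the cobar complex (or a minimal free resolution, or the lambda algebra): pick cocycles $\alpha,\beta,\delta$ for $h_1,h_0,D_1$, solve $du=\alpha\beta$ and $dv=\beta\delta$, and recognize the cohomology class of $u\delta+\alpha v$ as $G$. The only real difficulty there is the size of the resolution near the $52$-stem, which is precisely why the efficient route is to invoke Tangora's published computation \cite{Tan}.
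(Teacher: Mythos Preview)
Your primary route --- forming the bracket in the May spectral sequence and invoking May's convergence theorem --- runs into a genuine obstruction, not merely bookkeeping. The relation $h_0 D_1 = 0$ is indeed witnessed by the May differential $d_2(h_2 b_{22} b_{40}) = h_0 D_1$, so one is tempted to form $\langle h_1, h_0, D_1\rangle$ on the May $E_2$-page and then transfer it to $\mathrm{Ext}$. But in the relevant range there is a later \emph{crossing} differential
\[
d_4(h_1 b_{3,1}^2) = h_1 h_3 g_2 + h_1 h_5 g,
\]
and its presence voids the hypotheses of May's convergence theorem. So the May-page bracket does not automatically lift to a Massey product in $\mathrm{Ext}$, and the paper explicitly flags this route as inapplicable. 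The concern you voice about ``May-filtration jumps'' is exactly the right instinct, but here it is a hard failure rather than a nuisance.

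The approach that does work is the one you list as a variant, and it is in fact what Tangora actually did in \cite{Tan}: he computed in the lambda algebra, not the May spectral sequence. One shows (using another Massey product, since $D_1$ lies beyond the range of the Curtis tables) that $D_1$ has a lambda-algebra representative with leading term $\lambda_4\lambda_7\lambda_{11}\lambda_{15}\lambda_{15}$, and that $G$ is represented by $\lambda_2\lambda_4\lambda_7\lambda_{11}\lambda_{15}\lambda_{15}$. Since $d(\lambda_2) = \lambda_1\lambda_0$ witnesses $h_1 h_0 = 0$, the bracket $\langle h_1, h_0, D_1\rangle = G$ then drops out immediately. Your opening paragraph on definedness and vanishing indeterminacy is correct and is needed regardless of which model one computes in.
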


\begin{proof}
This Massey product is proven using the Lambda algebra. Since the elements $D_1$
and $G$ lie beyond the range of the published Curtis tables \cite{CGMM}, Tangora used another Massey product to
deduce that the representative of $D_1$ in the Lambda algebra has a leading term
$\lambda_{4}\lambda_{7}\lambda_{11}\lambda_{15}\lambda_{15}$. One can also use
unpublished results of Mahowald and Tangora to see directly that $G$ is represented by
$\lambda_{2}\lambda_{4}\lambda_{7}\lambda_{11}\lambda_{15}\lambda_{15}$. Then the Massey product
follows from $d(\lambda_{2})=\lambda_{1}\lambda_{0}$.
\end{proof}

\begin{rem} One cannot apply May's convergence theorem \cite{May}
directly to get this Massey product from
the May differential $d_2(h_2b_{22}b_{40})=h_0D_1$.
The problem is that there is a ``crossing" May differential
$d_4(h_1 b_{3,1}^2 ) = h_1 h_3 g_2 + h_1 h_5 g$
that voids the hypotheses of May's convergence theorem.
\end{rem}

\begin{rem}
Bruner has verified this Massey product using computer data.
\end{rem}

The classical calculation of Lemma \ref{lem:G-bracket}
allow us to deduce the analogous motivic calculation.

\begin{cor}
\label{cor:G-bracket}
In the motivic Adams $E_2$ page, we have a Massey product
$$\tau G=\langle h_1,h_0,D_1\rangle .$$
\end{cor}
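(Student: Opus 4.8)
The plan is to deduce the motivic statement from its classical counterpart, Lemma~\ref{lem:G-bracket}, by transporting the Massey product across the comparison between the classical and $\mathbb{C}$-motivic Adams $E_2$-pages. Recall that the $\mathbb{C}$-motivic Adams $E_2$-page is $\mathrm{Ext}$ over the $\mathbb{C}$-motivic Steenrod algebra, that it carries a non-nilpotent class $\tau$ in stem $0$ and Adams filtration $0$ with weight $-1$, and that inverting $\tau$ recovers the classical Adams $E_2$-page with $\tau$ adjoined as a unit; under this identification the motivic classes $h_0$, $h_1$, $D_1$, $G$ restrict to the classical classes of the same name. The structural input I need is the description of the relevant motivic $\mathrm{Ext}$ groups --- in the tridegrees of $D_1$, of $G$, and of the prospective bracket --- which is available in the charts of \cite{DI1} and \cite{DI2}.

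First I would check that the motivic Massey product $\langle h_1, h_0, D_1\rangle$ is defined, i.e.\ that $h_0 h_1 = 0$ and $h_0 D_1 = 0$ hold motivically. The first is a standard low-dimensional relation; the second holds because $h_0 D_1$ already vanishes classically (as is implicit in Lemma~\ref{lem:G-bracket}) and, in the relevant motivic tridegree, no new class accounts for a nonzero product, as one reads off from \cite{DI1}. From the same source I would record that the indeterminacy of the motivic bracket is trivial, so that $\langle h_1, h_0, D_1\rangle$ is a single well-defined class of the motivic $E_2$-page.

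Next, by naturality of the triple Massey product with respect to the DGA map that inverts $\tau$, the image of the motivic class $\langle h_1, h_0, D_1\rangle$ in the $\tau$-inverted $E_2$-page is the classical Massey product $\langle h_1, h_0, D_1\rangle$, which by Lemma~\ref{lem:G-bracket} equals $G$ up to a power of the unit $\tau$. In particular the motivic class is not $\tau$-torsion, so in the bidegree of $G$ it is a $\tau$-power multiple of the motivic class $G$; write it as $\tau^k G$ with $k \geq 0$. The exponent $k$ is then pinned down by a weight count: the bracket lies in weight $w(h_1) + w(h_0) + w(D_1)$, whereas $\tau^k G$ lies in weight $w(G) - k$. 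Inserting the weights of $h_0$, $h_1$, $D_1$ and $G$ from \cite{DI1} forces $k = 1$, so $\langle h_1, h_0, D_1\rangle = \tau G$.

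The only step that is not formal is this last one: verifying from the charts that, in the bidegree of $G$, the non-$\tau$-torsion part of motivic $\mathrm{Ext}$ consists only of $\tau$-power multiples of $G$, and then carrying out the weight arithmetic. This is exactly where the extra factor of $\tau$ --- the entire content of the corollary beyond Lemma~\ref{lem:G-bracket} --- is produced; everything else is naturality of Massey products and routine bookkeeping with the comparison between the two $E_2$-pages.
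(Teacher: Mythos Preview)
Your proposal is correct and follows essentially the same approach as the paper, which simply invokes the fact that classical calculations are recovered from motivic ones by inverting $\tau$. You have spelled out in detail the steps the paper leaves implicit---naturality of Massey products under $\tau$-inversion, identification of the non-$\tau$-torsion part of the target tridegree, and the weight count that forces the extra factor of $\tau$---and these are exactly the right ingredients.
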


\begin{proof}
This is immediate from the fact that classical calculations can be recovered
from motivic calculations by inverting $\tau$ \cite{DI1}.
\end{proof}

%

\begin{thm}
\label{thm:main}
In the motivic Adams spectral sequence, $d_2(D_1)=h_0^2h_3g_2$.
\end{thm}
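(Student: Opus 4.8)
The plan is to feed the motivic Massey product $\tau G = \langle h_1, h_0, D_1 \rangle$ of Corollary~\ref{cor:G-bracket} into Moss's higher Leibniz rule \cite{Moss}. By \cite{DI1}, the value of $d_2(D_1)$ is either $0$ or $h_0^2 h_3 g_2$, so it suffices to show that $d_2(D_1) \neq 0$.

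First I would record that $h_0$ and $h_1$ are permanent cycles, so that $d_2 h_0 = d_2 h_1 = 0$. After checking that no crossing Adams differential in this range voids the hypotheses of Moss's convergence theorem, the higher Leibniz rule applied to $\tau G = \langle h_1, h_0, D_1 \rangle$ gives the containment
\[
d_2(\tau G) \in \langle h_1, h_0, d_2(D_1) \rangle
\]
of cosets in the motivic $E_2$-page, the contributions involving $d_2 h_0$ and $d_2 h_1$ dropping out (the degenerate brackets they would produce vanish for degree reasons). I would then argue by contradiction. If $d_2(D_1) = 0$, the right side collapses to the indeterminacy of $\langle h_1, h_0, 0 \rangle$, namely $h_1 \cdot \mathrm{Ext}^{7,59}$ inside the group $\mathrm{Ext}^{8,61}$ in the $53$-stem where $d_2(\tau G)$ lives; in particular $d_2(\tau G)$ would be an $h_1$-multiple, possibly zero.

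The final step is to refute this with the motivic calculations of \cite{DI1} and the charts of \cite{DI2}, which exhibit $d_2(\tau G)$ as a definite nonzero class that is \emph{not} an $h_1$-multiple of the required form. This is exactly the point at which the argument must be motivic. Classically one has $G = \langle h_1, h_0, D_1 \rangle$ rather than $\tau G$ (Lemma~\ref{lem:G-bracket}), and the differential and indeterminacy that decide the issue are governed by $\tau$-torsion phenomena that collapse upon inverting $\tau$; the classical version of the Leibniz computation simply fails to separate $d_2(D_1) = 0$ from $d_2(D_1) = h_0^2 h_3 g_2$. Motivically, the displayed containment forces $d_2(D_1) \neq 0$, and hence $d_2(D_1) = h_0^2 h_3 g_2$.

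The hard part will be this last step: verifying the absence of crossing differentials for Moss's theorem and, more substantively, extracting from the motivic Adams spectral sequence both the precise value of $d_2(\tau G)$ and the piece of its indeterminacy that matters. These are genuinely motivic inputs with no classical counterpart, which is the whole point of the note.
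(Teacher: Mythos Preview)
Your proposal is correct and follows essentially the same route as the paper: apply Moss's higher Leibniz rule to the motivic Massey product $\tau G = \langle h_1, h_0, D_1 \rangle$ and use the nontriviality of $d_2(\tau G)$ from \cite{DI1} to force $d_2(D_1)\neq 0$. The paper is slightly more explicit, citing $d_2(\tau G)=h_5c_0d_0$ from \cite[Lemma~3.3.12]{DI1} and noting that the bracket has no indeterminacy, so your containment is in fact an equality and your indeterminacy analysis, while not wrong, is unnecessary.
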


\begin{proof}
Corollary \ref{cor:G-bracket} gives us
the Massey product $\tau G=\langle h_1,h_0,D_1\rangle $ in the motivic Adams spectral sequence.  Using the higher Leibniz rule of Moss \cite{Moss}, we have $d_2(\tau G)=\langle h_1,h_0,d_2(D_1)\rangle $ since there is no indeterminacy.
Since $d_2(\tau G)=h_5c_0d_0$ \cite[Lemma 3.3.12]{DI1}, we have $\langle h_1,h_0,d_2(D_1)\rangle =h_5c_0d_0$. In particular, this is nonzero. Then
the only possibility is that $d_2(D_1)=h_0^2h_3g_2$.
\end{proof}

\begin{rem}
Note that $h_0^2h_3g_2=h_2^2h_5d_0$,
so
\[
\langle h_1, h_0, h_0^2 h_3 g_2 \rangle =
\langle h_1, h_0, h_2^2 h_5 d_0 \rangle =
\langle h_1, h_0, h_2^2 \rangle h_5 d_0 = h_5 c_0 d_0,
\]
as dictated by the higher Leibniz rule.
\end{rem}

\begin{cor}
In the classical Adams spectral sequence, $d_2(D_1)=h_0^2h_3g_2$.
\end{cor}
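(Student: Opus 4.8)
The plan is to deduce the classical differential formally from the motivic one established in Theorem~\ref{thm:main}, exactly as Corollary~\ref{cor:G-bracket} was deduced from Lemma~\ref{lem:G-bracket}. The tool is the base-change comparison of \cite{DI1}: the classical Adams spectral sequence is obtained from the motivic one by inverting $\tau$, and this identification is compatible with the differentials because inverting $\tau$ is exact.

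First I would invoke Theorem~\ref{thm:main}, which gives $d_2(D_1) = h_0^2 h_3 g_2$ in the motivic Adams spectral sequence. Applying $\tau$-inversion, the motivic $E_2$-page maps to the classical $E_2$-page, carrying the classes named $D_1$ and $h_0^2 h_3 g_2$ to the classical classes of the same names and carrying the motivic $d_2$ to the classical $d_2$. Since the motivic relation is an honest equality, no spurious terms can appear after $\tau$-inversion, so we obtain $d_2(D_1) = h_0^2 h_3 g_2$ classically. It then remains only to observe that $h_0^2 h_3 g_2$, equivalently $h_2^2 h_5 d_0$, is nonzero on the classical $E_2$-page, so that the differential is not vacuous.

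The only point requiring care is the $\tau$-bookkeeping in this tridegree: one must confirm that $D_1$ and $h_0^2 h_3 g_2$ are not $\tau$-torsion, so that they survive $\tau$-inversion and are detected by their classical namesakes rather than mapping to zero. This is immediate from the explicit descriptions of the motivic and classical $E_2$-pages in \cite{DI1, DI2}, where $D_1$ and $h_2^2 h_5 d_0$ are exhibited alongside their classical counterparts. Granting these identifications, the corollary follows at once; I do not expect any genuine obstacle beyond this routine comparison.
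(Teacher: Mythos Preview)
Your proposal is correct and follows the same approach as the paper: the corollary is deduced from the motivic differential of Theorem~\ref{thm:main} by inverting $\tau$, using the comparison of \cite{DI1}. The paper's proof is a single sentence to this effect, without the extra $\tau$-torsion bookkeeping you mention, but your added care is harmless and accurate.
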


\begin{proof}
This is immediate from the fact that classical calculations can be recovered
from motivic calculations by inverting $\tau$ \cite{DI1}.
\end{proof}

\begin{rem}
In the classical Adams spectral sequence, $h_5c_0d_0$ and $d_2(G)$ are both zero.
This means that our proof is strictly motivic in nature.
We do not know a proof using only the classical Adams spectral sequence.
\end{rem}

\begin{rem}
Using only classical information,
we suggest a possible argument that $D_1$ does not survive the classical
Adams spectral sequence.
Start with the Massey product $G=\langle h_1,h_0,D_1\rangle $
and the classical differential on $d_3(G) = P h_5 d_0$.
Moss's convergence theorem \cite{Moss} then implies that
$\langle \eta, 2, \{D_1\} \rangle $ is not a well-formed Toda bracket. The only possibility is that $D_1$ does not survive. However, that still leaves two possible differentials: $d_2(D_1) = h_0^2 h_3 g_2$, or $d_4(D_1) = g n$. We do not know how to rule out
the second alternative using only the classical Adams spectral sequence.
\end{rem}

\begin{rem}
Mahowald's original argument for
the differential $d_2(D_1)$ used a faulty computation of the
Toda bracket $\langle \theta_4, 2, \sigma^2\rangle$.  Partial information
about this Toda
bracket is used in the construction of a Kervaire invariant element $\theta_5$ in dimension $62$ \cite{BJM}.
Our improved understanding of the Toda bracket does not contradict
any of the claims of \cite{BJM}.
\end{rem}

\end{document}